\newtheorem{theorem}{Theorem}[section]
\theoremstyle{definition}
\numberwithin{definition}{section}
\theoremstyle{remark}
\begin{document}
\title[Sum of $r$'th roots]{The sum of the $r$'th roots of \\first $n$ natural numbers and new formula for factorial}
\date{}
\author{Snehal Shekatkar}
\address{Dept.of Physics\\Indian institute of science education and research\\ Pune, India\\Pin:411021}
\email{snehalshekatkar@gmail.com}

\begin{abstract} 
\ Using the simple properties of Riemman integrable functions, Ramanujan's formula for sum of the square roots of first $n$ natural numbers has been generalized to include $r'th$ roots where $r$ is any real number greater than 1. As an application we derive formula that gives factorial of positive integer $n$ similar to Stirling's formula.
\end{abstract}

\maketitle
         The formula for the sum of the square roots of first n natural numbers has been given by Srinivas Ramanujan (\cite{Ra}). Here we extend his result to the case of $r'$th roots, where $r$ is a real number greater than 1. \\
\textbf{Statement of result:}
\begin{theorem}
\ Let $r$ be a real number with $r\geq 1$ and $n$ be a positive integer. Then
\begin{equation}
\sum_{x=1}^{x=n}x^{\frac{1}{r}}\quad = \quad\frac{r}{r+1}(n+1)^{\frac{1+r}{r}}-\frac{1}{2}(n+1)^{\frac{1}{r}}-\phi_{n}(r)
\end{equation}
 where $\phi_{n}$ is a function of $r$ with $n$ as a parameter. This function is bounded between $0$ and $\frac{1}{2}$.
\end{theorem}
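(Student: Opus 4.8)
The plan is to recognize $\phi_n(r)$ --- which equation (1) really just \emph{defines} --- as the cumulative error of the trapezoidal approximation to $\int_0^{n+1}x^{1/r}\,dx$. Write $f(x)=x^{1/r}$, so that $\int_0^{n+1}f(x)\,dx=\frac{r}{r+1}(n+1)^{(r+1)/r}$ and $f(0)=0$. A one-line rearrangement then gives
\begin{equation*}
\phi_n(r)=\sum_{k=0}^{n}E_k,\qquad E_k:=\int_k^{k+1}f(x)\,dx-\frac{f(k)+f(k+1)}{2},
\end{equation*}
because $\sum_{k=0}^n\tfrac12\bigl(f(k)+f(k+1)\bigr)=\sum_{x=1}^n x^{1/r}+\tfrac12(n+1)^{1/r}$ once the vanishing term $f(0)$ is dropped. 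So the theorem reduces to the purely analytic claim $0\le\sum_{k=0}^n E_k\le\tfrac12$. The only facts about $f$ I will need are that, for $r\ge 1$, $f$ is increasing and concave on $[0,\infty)$ (indeed $f''(x)=\tfrac1r(\tfrac1r-1)x^{1/r-2}\le 0$), and that on $(0,\infty)$ it is differentiable with $f'$ decreasing.

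The lower bound is immediate: on each $[k,k+1]$ concavity forces the graph of $f$ to lie on or above its chord, so $E_k\ge 0$ and hence $\phi_n(r)\ge 0$. For the upper bound the interval $[0,1]$ has to be handled on its own, since $f'(0^{+})=+\infty$ when $r>1$. There one computes directly $E_0=\frac{r}{r+1}-\frac12=\frac{r-1}{2(r+1)}$, which lies in $[0,\tfrac12)$ for $r\ge 1$. For $k\ge 1$ I will compare $f$ on $[k,k+1]$ with its tangent line at the left endpoint: concavity gives $f(x)\le f(k)+(x-k)f'(k)$, and since $f(k+1)-f(k)=\int_k^{k+1}f'\ge f'(k+1)$ (as $f'$ decreases), the gap between $f$ and its chord is at most $(x-k)\bigl(f'(k)-f'(k+1)\bigr)$ on that interval; integrating yields $E_k\le\tfrac12\bigl(f'(k)-f'(k+1)\bigr)$. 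Summing over $k=1,\dots,n$ telescopes to $\tfrac12\bigl(f'(1)-f'(n+1)\bigr)<\tfrac12 f'(1)=\tfrac1{2r}$. Hence $\phi_n(r)\le\frac{r-1}{2(r+1)}+\frac1{2r}$, and an elementary manipulation shows this is $\le\tfrac12$ if and only if $r\ge 1$, which finishes the proof and, in particular, makes the bound uniform in $n$.

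The step I expect to be the main obstacle is exactly the breakdown of the clean telescoping estimate on $[0,1]$: the vertical tangent of $x^{1/r}$ at the origin rules out any derivative bound there, so $E_0$ must be evaluated exactly, and then the whole argument succeeds only because the resulting inequality $\frac{r-1}{2(r+1)}+\frac1{2r}\le\tfrac12$ happens to be true --- with equality precisely at $r=1$, where $\phi_n$ vanishes identically anyway. A minor point to watch is to retain the term $-\tfrac12 f'(n+1)$ just long enough to discard it, so that the constant $\tfrac12$ does not secretly depend on $n$.
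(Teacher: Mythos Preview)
Your argument is correct. The identification of $\phi_n(r)$ with the trapezoidal error for $\int_0^{n+1}x^{1/r}\,dx$ is exactly the paper's starting point as well: the paper writes $2I=L+U+\phi$ on $[0,n]$, which after the shift $n\mapsto n+1$ is the same rearrangement you give in your first display.

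Where you genuinely diverge is in the boundedness of $\phi_n$. The paper does not argue analytically at a fixed $r$; instead it evaluates $\phi_n$ at the two endpoints $r=1$ (where $\phi_n=0$ by the exact formula for $\sum i$) and $r\to\infty$ (where $\phi_n\to\tfrac12$), and then asserts monotonicity in $r$ to interpolate. Your route is different: you fix $r$, get $\phi_n\ge 0$ directly from concavity, and for the upper bound isolate $E_0=\frac{r-1}{2(r+1)}$ exactly and control $\sum_{k\ge 1}E_k$ by the telescoping tangent-line estimate $E_k\le\tfrac12\bigl(f'(k)-f'(k+1)\bigr)$, ending with the clean inequality $\frac{r-1}{2(r+1)}+\frac{1}{2r}\le\frac12$. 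This buys you a fully rigorous, $n$-uniform bound without ever needing to know that $\phi_n$ is monotone in $r$, and it makes transparent why $r=1$ is the equality case. The paper's approach is shorter but leaves the monotonicity step as an unproved assertion; yours fills that gap at the cost of the extra tangent/telescoping computation.
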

\begin{proof}
 For a closed interval [a,b] we define partition of this interval as a set of points {$x_{0}=a,x_{1},...,x_{n-1}=b$} where $x_{i} < x_{j}$ whenever $i < j$ . Now consider the closed interval [0,n] and consider a partition $P$ of this interval,where $P$ is a set
$\{0,1,2,...,n\}$.\\

 Consider a function defined as $f(x)$=$x^{\frac{1}{r}}$.\\
We have,\\
$$I=\int_{0}^{n}f(x)dx=\lim_{\Delta x_{i}\rightarrow 0} \sum_{i=0}^{n-1} f(x_{i})\Delta x_{i}$$
where $$\Delta x_{i}=x_{i+1}-x_{i}$$
\ We define lower sum for partition $P$ as:
$$L=\sum_{i=0}^{n-1}f(i)\Delta x_{i}=\sum_{i=0}^{n-1}i^{\frac{1}{r}} $$
Similarly,upper sum for $P$ is
$$U=\sum_{i=1}^{n}f(i)\Delta x_{i}=\sum_{i=0}^{n-1}(i+1)^{\frac{1}{r}}$$
\ We write value of integral $I$ as average of $L$ and $U$ with some correction term.
$$2I= L+U+\phi$$
$$\therefore 2\int_{0}^{n}x^{\frac{1}{r}}dx=\sum_{i=0}^{n-1}\bigl(i^{\frac{1}{r}}+(i+1)^{\frac{1}{r}}\bigr)+\phi$$ 
$$\therefore \frac{2r}{r+1}\bigl(x^{\frac{1+r}{r}}\bigr)_{0}^{n}=0^{\frac{1}{r}}+2\sum_{i=1}^{n-1}i^{\frac{1}{r}}+n^{\frac{1}{r}}+\phi$$
$$\therefore \sum_{i=1}^{n-1}i^{\frac{1}{r}}=\frac{r}{r+1}n^{\frac{1+r}{r}}-\frac{n^{\frac{1}{r}}}{2}-\phi$$
\ where the term of $\frac{1}{2}$ has been absorbed into $\phi$.
\begin{equation}
\sum_{i=1}^{n}i^{\frac{1}{r}}=\frac{r}{r+1}(n+1)^{\frac{1+r}{r}}-\frac{1}{2}(n+1)^{\frac{1}{r}}-\phi
\end{equation}

\ Taking limit of (2) as $r\rightarrow \infty$,$L.H.S.\rightarrow n$ and $R.H.S.\rightarrow (n+\frac{1}{2}-\phi)$, so that in the limit  $\phi\rightarrow \frac{1}{2}$\\
\ On the other extreme, for $r=1$,\\
\begin{eqnarray*}
R.H.S.&=&\frac{1}{2}(n+1)^{2}-\frac{1}{2}(n+1)-\phi\\
&=&\frac{1}{2}(n^{2}+n)-\phi\\
&=&\frac{n(n+1)}{2}-\phi\\
\end{eqnarray*}
and,$$L.H.S.=\frac{n(n+1)}{2}$$
This gives $\phi_{n}(1)=0$\\

Since the difference between first and second term can easily be shown to be monotonic, we see that
$\phi$ is bounded between between 0 and $\frac{1}{2}$ for $1\leq r < \infty $
\end{proof}

As an application of the formula derived above, we derive formula to derive factorial of positive integer. We begin by taking derivative of (2) with respect to $r$. After rearranging the terms, we get,
\begin{equation}
\sum_{i=1}^{i=n}i^{\frac{1}{r}}\log{i}=\left[\frac{r}{r+1}(n+1)-\frac{1}{2}\right](n+1)^{\frac{1}{r}}\log(n+1)-\frac{r^{2}}{(r+1)^{2}}(n+1)^{\frac{1+r}{r}}+r^{2}\frac{d\phi}{dr}
\end{equation}
After taking limit of this equation as $r\rightarrow \infty$, we get following equation:
\begin{equation}
\sum_{i=1}^{n}\log{i}=(n+\frac{1}{2})\log(n+1)-(n+1)+\lim_{r\to\infty}r^{2}\frac{d\phi}{dr}
\end{equation}

In the above expression, $L.H.S$ is just $\log(n!)$. Let us assume that limit in the last term of the above equation exists and is finite and say that it is $\xi$. Then we can rewrite above equation as follows:
\begin{equation}
n!=(n+1)^{n+\frac{1}{2}}e^{-n-1}e^{\xi}
\end{equation}

Numerically it turns out that the quantity $e^{\xi}$ indeed converges to finite value, the value being close to $\sqrt{2\pi}$. This formula is similar to precise version of Stirling's formula (\cite{St}).

Equation (3) allows us to find one more interesting formula. After putting $r=1$ in (3) and after little rearrangement, we get following beautiful formula:
\begin{equation}
\log\left[1^{1}.2^{2}... n^{n}\right]=\frac{n(n+1)}{2}\log(n+1)-\frac{1}{4}(n+1)^{2}+\frac{d\phi}{dr}|_{r=1}
\end{equation}

Numerically it turns out that quantity $\frac{d\phi}{dr}|_{r=1}$ is very small and can be neglected.

\end{document}